\def\R{{ \mathbb{R}}}
\newtheorem{theorem}{Theorem}
\newtheorem{lemma}[theorem]{Lemma}
\newenvironment{proof}[1][Proof]{\textbf{#1.} }{\ \rule{0.5em}{0.5em}}
\begin{document}
\renewcommand{\thefootnote}{\fnsymbol{footnote}}

\date{February 25, 2011}
\title{\small\bf{{ PARAMETER ESTIMATION FOR  FRACTIONAL ORNSTEIN-UHLENBECK PROCESSES: NON-ERGODIC CASE}}}
\author{ \small {Rachid Belfadli$^{1}$ $\qquad$ Khalifa Es-Sebaiy$^{2}$$\qquad$ Youssef Ouknine$^{3}$$\qquad$\vspace{2mm}} \\
 \small$^1$ Polydisciplinary Faculty of Taroudant, University Ibn Zohr, Taroudant, Morocco.\\
 \small belfadli@gmail.com\vspace{2mm}\\
\small$^2$ Institut de Mathématiques de Bourgogne, Université de Bourgogne, Dijon, France.\\
\small khalifasbai@gmail.com\vspace{2mm}\\
\small$^3$ Department of Mathematics, Faculty of Sciences Semlalia, Cadi Ayyad University, \\\small 2390 Marrakesh, Morocco.\\\small ouknine@ucam.ac.ma
 \vspace*{0.1in}}

\maketitle

\begin{abstract}
We consider the parameter estimation problem for the non-ergodic fractional Ornstein-Uhlenbeck process defined as $dX_t=\theta X_tdt+dB_t,\  t\geq0$, with a parameter  $\theta>0$, where $B$ is  a fractional
Brownian motion of Hurst index $H\in(\frac{1}{2},1)$. We
study the consistency and the asymptotic distributions of the least squares estimator $\widehat{\theta}_t$ of  $\theta$ based on the observation $\{X_s,\ s\in[0,t]\}$ as $t\rightarrow\infty$.
\end{abstract}

\vskip0.5cm

\noindent{\small \textbf{Key words and phrases:} Parameter estimation, Non-ergodic fractional Ornstein-Uhlenbeck process, Young integral.}

\vskip0.2cm

\noindent {\small \textbf{2000 Mathematics Subject Classification:}
62F12, 60G18, 60G15.}

\renewcommand{\thefootnote}{\fnsymbol{footnote}}

\vskip1cm

\section{Introduction}
We consider the Ornstein-Uhlenbeck process $X=\left\{X_t, t\geq0\right\}$ given by the following linear stochastic differential equation
\begin{eqnarray}\label{OU}X_0=0;\quad  dX_t=\theta X_tdt+dB_t,\quad t\geq0,
\end{eqnarray}where  $B$ is a fractional Brownian motion of Hurst index $H>\frac{1}{2}$ and  $\theta\in(-\infty,\infty) $   is an unknown parameter. An interesting problem is to estimate the parameter $\theta$ when one observes the whole trajectory of $X$.
 First, let us recall some results in the case  when $B$ is a standard Brownian motion.
  In this special case, the parameter estimation for $\theta$ has been well studied by using the classical maximum likelihood method or by using the trajectory fitting method. If $\theta<0$ (ergodic case), the maximum likelihood estimator (MLE) of $\theta$ is asymptotically normal (see Liptser and Shiryaev \cite{LS}, Kutoyants \cite{Kutoyants}). If $\theta>0$ (non-ergodic case), the MLE of $\theta$ is asymptotically Cauchy (see  Basawa and  Scott \cite{BS}, Dietz and Kutoyants \cite{DK}). Recently, in a more general context, several authors extended this study  to  some generalizations of Ornstein-Uhlenbeck process  driven by Brownian motion (for instance, Barczy and Pap \cite{BP}). Similar properties of the asymptotic behaviour of MLE has also been obtained  with respect to the  trajectory fitting estimators (see Dietz and Kutoyants \cite{DK}).

When $B$ is replaced by an $\alpha$-stable Lévy motion in the equation (\ref{OU}), Hu and Long \cite{HL} discussed the parameter estimation of $\theta$ in both the ergodic and the non-ergodic cases. They used the trajectory fitting method combined with the weighted least squares technique.\vspace{2mm}

Now, let  us consider a parameter estimation problem of the parameter $\theta$ for the fractional Ornstein-Uhlenbeck process  $X$ of (\ref{OU}).

In the case  $\theta<0$ (corresponding to the ergodic case),  Hu and Nualart \cite{HN} studied  the parameter estimation for $\theta$  by using the least squares estimator (LSE) defined as
\begin{eqnarray}\label{(LSE)} \widehat{\theta}_t =\frac{\int_0^tX_sdX_s}{\int_0^tX_s^2ds},\quad t\geq0.
\end{eqnarray}This LSE is obtained by the least squares technique, that is, $ \widehat{\theta}_t$ (formally) minimizes
\[\theta \longmapsto \int_0^t\left|\dot{{X}}_s+\theta X_s\right|^2ds.\]
To obtain the consistency of the LSE $\widehat{\theta}_t$, the authors of  \cite{HN}  are forced to consider $\int_0^tX_sdX_s$ as a Skorohod integral rather than Young integral
in the definition (\ref{(LSE)}). Assuming $\int_0^tX_sdX_s$ is a Skorohod integral and $\theta<0$, they proved the strong consistence of $\widehat{\theta}_t$ if $H\geq\frac{1}{2}$, and  that the LSE $\widehat{\theta}_t$ of $\theta$ is asymptotically normal if $H\in[\frac{1}{2},\frac{3}{4})$. Their proof of the central limit theorem is based on the fourth moment theorem of Nualart and Peccati \cite{NP}.

In this paper, our purpose is to study the non-ergodic case corresponding to $\theta>0$. More precisely, we shall estimate $\theta$ by  the LSE $\widehat{\theta}_t$ defined in  (\ref{(LSE)}), where in our case,  the   integral $\int_0^tX_sdX_s$ is interpreted as a  Young integral. Indeed in that case, we have $\widehat{\theta}_t =\frac{X_t^2}{2\int_0^tX_s^2ds}$ which converges almost surely to $\theta$, as $t$ tends to infinity (see   Theorem \ref{convergence almost surely}). Moreover, it turned out that the path-wise approach is the preferred way to simulate numerically  an estimator $\widehat{\theta}_t$. Our technics used in this work are  inspired from the recent
paper by Es-Sebaiy and Nourdin \cite{EN}.

The organization of our paper is as follows. Section 2 contains the presentation of the
basic tools that we will need throughout the paper: fractional Brownian motion, Malliavin derivative,  Skorohod integral, Young integral and the link between Young and Skorohod integrals.
The aim of Section 3 is twofold. Firstly, we prove when $H>\frac{1}{2}$ the strong consistence of the LSE $\widehat{\theta}_t$, that is, $\widehat{\theta}_t$ converges almost surely to $\theta$, as $t$ goes to infinity. Secondly,  we investigate the asymptotic distribution of our estimator $\widehat{\theta}_t$ in the case $H>\frac{1}{2}$.  We  obtain that (see Theorem \ref{convergence distribution})
\begin{eqnarray*}
e^{\theta t}\left(\widehat{\theta}_t-\theta\right) \overset{\texttt{law}}{\longrightarrow}2\theta\mathcal{C}(1)
\quad \mbox{ as } t\longrightarrow \infty,
\end{eqnarray*}with  $\mathcal{C}(1)$  the standard Cauchy distribution  with the probability density function
$\frac{1}{\pi (1+x^2)};\  x\in \mathbb{R}$.
\section{Preliminaries}
In this section we describe some basic facts on the  stochastic calculus with respect to a
fractional Brownian motion. For more complete presentation on the subject, see   \cite{Nualart}, \cite{AN} and \cite{Nourdin}.\\
The  fractional Brownian motion $(B_t, t\geq0)$ with Hurst parameter $H\in(0,1)$, is defined as a centered Gaussian process starting from zero with covariance
\[R_H(t,s)=E(B_tB_s)=\frac{1}{2}\left(t^{2H}+s^{2H}-|t-s|^{2H}\right).\]
We assume that $B$ is defined on a complete probability space $(\Omega, \mathcal{F}, P)$ such
that $\mathcal{F}$ is the sigma-field generated by $B$. By  Kolmogorov's continuity criterion and the fact  $$E\left(B_t-B_s\right)^2=|s-t|^{2H};\ s,\ t\geq~0,$$ we deduce that $B$ has H\"older continuous paths of order $H-\varepsilon$, for all $\varepsilon\in(0,H)$.

 Fix a time interval $[0, T]$. We denote by $\cal{H}$ the canonical Hilbert space associated to the  fractional
Brownian motion $B$. That is, $\cal{H}$ is the closure of the linear span $\mathcal{E}$ generated by the indicator functions $ 1_{[0,t]},\ t\in[0,T] $ with respect to the scalar product \[\langle1_{[0,t]},1_{[0,s]}\rangle=R_H(t,s).\]
The application $\varphi\in\mathcal{E}\longrightarrow B(\varphi)$ is an isometry from $\mathcal{E}$ to the Gaussian space generated by $B$ and it can be extended to $\cal{H}$.\\
If $H>\frac{1}{2}$ the elements of $\cal{H}$ may be not functions but distributions of negative order (see \cite{PT}). Therefore, it is of interest to know significant subspaces of functions contained in it.\\
Let $|\cal{H}|$  be the set of measurable functions  $\varphi$   on $[0, T]$ such that
\[\|\varphi\|_{|\cal{H}|}^2:=H(2H-1)\int_0^T\int_0^T|\varphi(u)||\varphi(v)||u-v|^{2H-2}dudv<\infty.\] Note that, if $\varphi,\ \psi\in|\cal{H}|$,
\[ E(B(\varphi)B(\psi))=H(2H-1)\int_0^T\int_0^T\varphi(u)\psi(v)|u-v|^{2H-2}dudv.\]
It follows actually from \cite{PT}  that the space $|\cal{H}|$ is a Banach space for the norm $\|.\|_{|\cal{H}|}$ and it is included in $\cal{H}$. In fact,
\[L^2([0, T]) \subset L^{\frac{1}{H}}([0, T])\subset|\cal{H}|\subset\cal{H}.\]

Let
$\mathrm{C}_b^{\infty}(\R^n,\R)$
 be the class of infinitely
differentiable functions $f: \R^n \longrightarrow \R$ such that $f$ and all its partial derivatives are bounded.
We denote by $\cal{S}$ the class of smooth cylindrical random
variables F of the form
\begin{eqnarray}F = f(B(\varphi_1),...,B(\varphi_n)),\label{functional}\end{eqnarray} where $n\geq1$, $f\in \mathrm{C}_b^{\infty}(\R^n,\R)$
 and $\varphi_1,...,\varphi_n\in\cal{H}.$\\
 The derivative operator $D$ of a smooth and cylindrical random variable $F$ of the form (\ref{functional}) is
defined as the $\cal{H}$-valued random variable
$$D_tF=\sum_{i=1}^{N}\frac{\partial f}{\partial x_i}(B(\varphi_1),...,B(\varphi_n))\varphi_i(t)$$
In this way the derivative $DF$ is an element of $L^2(\Omega
;\cal{H})$. We denote by $D^{1,2}$ the closure of $\mathcal{S}$ with
respect to the norm defined by
$$\|F\|_{1,2}^2=E(\|F\|^2)+E(\|DF\|^2_{{\cal{H}}}).$$
The divergence operator $\delta$ is the adjoint of the derivative
operator $D$. Concretely, a random
variable $u\in L^2(\Omega;\cal{H})$ belongs to the domain of the divergence operator $Dom\delta$ if
\[E\left|\langle DF,u\rangle_{\cal{H}}\right|\leq c\|F\|_{L^2(\Omega)}\]for every $F\in \mathcal{S}$. In this case $\delta(u)$ is given by the duality relationship
\begin{eqnarray*}E(F\delta(u))=E\left<DF,u\right>_{\cal{H}}
\end{eqnarray*}
for any $F\in D^{1,2}$. We will
make use of the notation
$$\delta(u)=\int_0^Tu_s\delta B_s,\quad u\in Dom(\delta).$$
In particular, for $h\in\cal{H}$, $B(h)=\delta(h)=\int_0^Th_s\delta B_s.$

For every $n\geq1$, let ${\cal{H}}_n$ be the nth Wiener chaos of $B$
, that is, the closed linear subspace of $L^2(\Omega)$ generated by the random variables $\{H_n(B(h)), h\in{\cal{H}}, \|h\|_{{\cal{H}}} = 1\}$ where $H_n$ is the nth Hermite polynomial. The mapping $I_n(h^{\otimes n})=n!H_n(B(h))$ provides a linear isometry between the symmetric tensor product ${\cal{H}}^{\odot n}$ (equipped with the modified norm $\|.\|_{{\cal{H}}^{\odot n}}=\frac{1}{\sqrt{n!}}\|.\|_{{\cal{H}}^{\otimes n}}$) and ${\cal{H}}_n$. For every  $f,g\in{{\cal{H}}}^{\odot n}$ the following multiplication formula holds
\[E\left(I_n(f)I_n(g)\right)=n!\langle f,g\rangle_{{\cal{H}}^{\otimes n}}.\]
 Finally, It is well-known   that $L^2(\Omega)$ can be decomposed into the infinite orthogonal sum of the spaces ${\cal{H}}_n$. That is, any square
integrable random variable $F\in L^2(\Omega)$  admits the following chaotic expansion
\[F=E(F)+\sum_{n=1}^{\infty}I_n(f_n),\]
where the $f_n \in{{\cal{H}}}^{\odot n}$  are uniquely determined by
$F$.

Fix $T>0$. Let $f, g: [0,T]\longrightarrow\mathbb{R}$ are H\"older continuous functions of orders $\alpha\in(0,1)$
and $\beta\in(0,1)$ with $\alpha+\beta>1$. Young \cite{Young} proved that the Riemann-Stieltjes
integral (so-called Young integral) $\int_0^Tf_sdg_s$ exists. Moreover, if $\alpha=\beta\in(\frac{1}{2},1)$ and
$\phi: \mathbb{R}^2\longrightarrow\mathbb{R}$ is a function of class $\mathcal{C}^1$,
  the integrals $\int_0^.\frac{\partial\phi}{\partial f}(f_u,g_u)df_u$ and $\int_0^.\frac{\partial\phi}{\partial g}(f_u,g_u)dg_u$ exist in the Young sense and the following change of variables formula holds:
\begin{eqnarray}\label{change of variables formula}
\phi(f_t,g_t)=\phi(f_0,g_0)+\int_0^t\frac{\partial\phi}{\partial f}(f_u,g_u)df_u+\int_0^t\frac{\partial\phi}{\partial g}(f_u,g_u)dg_u,\quad 0\leq t\leq T.
\end{eqnarray}
As a consequence, if $H>\frac{1}{2}$ and $(u_t,\ t\in[0, T])$ be a process with H\"older paths of order $\alpha>1-H$, the integral $\int_0^Tu_sdB_s$ is well-defined as Young integral. Suppose moreover that for any  $ t\in[0,T]$, $u_t\in D^{1,2}$, and
\[P\left(\int_0^T\int_0^T|D_su_t||t-s|^{2H-2}dsdt<\infty\right)=1.\]
Then, by \cite{AN}, $u\in Dom\delta$ and for every $ t\in[0,T]$,
\begin{eqnarray}\label{link young skorohod}\int_0^tu_sdB_s=\int_0^tu_s\delta B_s+H(2H-1)\int_0^t\int_0^tD_su_r|s-r|^{2H-2}drds.
\end{eqnarray}
In particular, when $\varphi$ is a non-random  H\"older continuous function of order $\alpha>1-H$, we obtain
\begin{eqnarray}\label{non random}\int_0^T\varphi_sdB_s=\int_0^T\varphi_s\delta B_s=B(\varphi).\end{eqnarray}
\\In addition,  for all $\varphi,\ \psi\in|\cal{H}|$,
\begin{eqnarray}E\left(\int_0^T\varphi_sdB_s\int_0^T\psi_sdB_s\right)
=H(2H-1)\int_0^T\int_0^T\varphi(u)\psi(v)|u-v|^{2H-2}dudv.\end{eqnarray}

\section{Asymptotic behavior of the least squares estimator}
Throughout this paper we assume $H\in(\frac{1}{2},1)$ and $\theta>0$.
Let us consider the equation (\ref{OU}) driven by a fractional Brownian motion $B$  with Hurst parameter $H$ and
$\theta$ is the unknown parameter to be estimated from the observation $X$. The linear equation (\ref{OU}) has the
following explicit solution:
\begin{eqnarray}\label{explicit solution}X_t=e^{\theta t}\int_0^te^{-\theta s}dB_s,\qquad t\geq0,
\end{eqnarray}
where the integral $\int_0^te^{-\theta s}dB_s$ is a  Young integral. \\
Let us introduce the following process
\[\xi_t:=\int_0^te^{-\theta s}dB_s,\qquad t\geq0.\]By using the equation (\ref{OU}) and (\ref{explicit solution}) we can write the LSE $\widehat{\theta}_t$ defined in (\ref{(LSE)}) as follows
\begin{eqnarray}\label{representation of LSE}\widehat{\theta}_t=\theta+\frac{\int_0^tX_sdB_s}{\int_0^tX_s^2ds}=\theta+\frac{\int_0^te^{\theta s}\xi_sdB_s}{\int_0^te^{2\theta s}\xi_s^2ds}.
\end{eqnarray}
\subsection{Consistency of the estimator LSE}
The following theorem proves the strong consistency of the LSE $\widehat{\theta}_t $.
\begin{theorem}\label{convergence almost surely} Assume $H\in(\frac{1}{2},1)$, then
\begin{eqnarray*}\widehat{\theta}_t   {\longrightarrow} \theta\ \mbox{ almost surely}
\end{eqnarray*}as $t\longrightarrow\infty.$
\end{theorem}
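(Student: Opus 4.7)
The plan is to rewrite $\widehat{\theta}_t$ in a more tractable form using the Young change of variables formula and then identify its limit through the explicit solution $X_t=e^{\theta t}\xi_t$. Since $X$ has H\"older continuous paths of any order $\alpha<H$, formula (\ref{change of variables formula}) applied to $\phi(x,y)=xy$ with $f=g=X$ gives $X_t^2=2\int_0^t X_s\, dX_s$ (using $X_0=0$). Substituting into (\ref{(LSE)}) and using $X_s=e^{\theta s}\xi_s$, followed by the change of variables $u=t-s$ in the denominator, yields
\[
\widehat{\theta}_t \;=\; \frac{X_t^2}{2\int_0^t X_s^2\, ds} \;=\; \frac{\xi_t^2}{2\int_0^t \xi_{t-u}^2\, e^{-2\theta u}\, du}.
\]

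The core of the argument is to show that $\xi_t$ converges almost surely as $t\to\infty$ to a nondegenerate Gaussian random variable $\xi_\infty$. To this end, I would apply Young integration by parts (i.e.\ (\ref{change of variables formula}) with $\phi(x,y)=xy$) to the H\"older continuous functions $s\mapsto e^{-\theta s}$ and $s\mapsto B_s$, obtaining
\[
\xi_t \;=\; e^{-\theta t} B_t \;+\; \theta\int_0^t e^{-\theta s}\, B_s\, ds.
\]
The boundary term tends to zero a.s.\ because $|B_s|$ grows only polynomially (specifically $|B_s|=O(s^{H+\varepsilon})$ a.s.\ from H\"older continuity), and the integral converges absolutely a.s.\ to $\xi_\infty := \theta\int_0^\infty e^{-\theta s}B_s\, ds$, since $E\int_0^\infty e^{-\theta s}|B_s|\, ds<\infty$. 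The limit $\xi_\infty$ is a centered Gaussian whose variance equals $H(2H-1)\int_0^\infty\!\!\int_0^\infty e^{-\theta(u+v)}|u-v|^{2H-2}\,du\,dv>0$, so $P(\xi_\infty\neq 0)=1$.

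To conclude, observe that $\xi$ has continuous paths and an a.s.\ finite limit at infinity, hence $M:=\sup_{s\geq 0}\xi_s^2<\infty$ a.s. The dominating bound $\xi_{t-u}^2 e^{-2\theta u}\leq M e^{-2\theta u}$ together with the pointwise limit $\xi_{t-u}^2\to\xi_\infty^2$ for each fixed $u\geq 0$ permits dominated convergence:
\[
\int_0^t \xi_{t-u}^2\, e^{-2\theta u}\, du \;\longrightarrow\; \xi_\infty^2\int_0^\infty e^{-2\theta u}\, du \;=\; \frac{\xi_\infty^2}{2\theta},
\]
while the numerator tends to $\xi_\infty^2$. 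Since $\xi_\infty^2>0$ a.s., dividing yields $\widehat{\theta}_t\to\theta$ a.s.

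The main technical obstacle is the almost-sure convergence of $\xi_t$: both the validity of the Young integration by parts (needing the combined H\"older exponent to exceed $1$) and the integrability of $e^{-\theta s}|B_s|$ must be verified within the framework of Section~2. Once that convergence to a nonzero Gaussian limit is established, the rest of the argument reduces to elementary asymptotic analysis of the ratio.
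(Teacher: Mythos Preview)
Your proof is correct and takes a genuinely different, more elementary route than the paper. The paper establishes the almost-sure convergence $\xi_t\to\xi_\infty$ via a Borel--Cantelli argument based on a fractional-integral representation of $\int_t^\infty e^{-\theta s}\,dB_s$ together with stochastic Fubini (Lemma~\ref{convergence p.s of xi}); you obtain it instead from the pathwise identity $\xi_t=e^{-\theta t}B_t+\theta\int_0^t e^{-\theta s}B_s\,ds$, which reduces the question to the sub-exponential growth of $B$ and an absolutely convergent ordinary Lebesgue integral. For the denominator, the paper applies L'H\^opital's rule to $e^{-2\theta t}\int_0^t e^{2\theta s}\xi_s^2\,ds$ (Lemma~\ref{convergence of dominating}), whereas your change of variables $u=t-s$ followed by dominated convergence is an equally clean alternative. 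Your approach buys simplicity and an explicit pathwise formula for $\xi_\infty$; the paper's approach stays within the Gaussian-analysis framework and yields, as a by-product, the exact $L^2$ rate $E[(\xi_t-\xi_\infty)^2]=\frac{H\Gamma(2H)}{\theta^{2H}}e^{-2\theta t}$. One small caveat: the growth bound $|B_s|=O(s^{H+\varepsilon})$ a.s.\ does not follow directly from H\"older continuity on compact intervals (the H\"older constant may depend on the interval); it requires a short separate argument, e.g.\ Gaussian tail estimates and Borel--Cantelli along the integers combined with stationarity of increments to control the oscillation on $[n,n+1]$. This is routine but should be made explicit.
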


For the proof of Theorem \ref{convergence almost surely} we need the following two lemmas.
\begin{lemma}\label{convergence p.s of xi}Suppose that  $H>\frac{1}{2}$. Then
\begin{itemize}
\item[i)] For all $\varepsilon\in(0,H)$, the process $\xi$  admits a modification with $(H-\varepsilon)$-H\"older continuous paths, still denoted $\xi$ in the sequel.
\item[ii)] $\xi_t\longrightarrow \xi_{\infty}:=\int_0^{\infty}e^{-\theta r}dB_r$ almost surely and in $L^2(\Omega)$ as $t\longrightarrow\infty$.
\end{itemize}
\end{lemma}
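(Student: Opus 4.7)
My plan is to exploit the deterministic smoothness of the integrand $s\mapsto e^{-\theta s}$, which by (\ref{non random}) realizes $\xi_t$ as the Gaussian variable $B(\varphi_t)$ with $\varphi_t(s)=e^{-\theta s}\mathbf{1}_{[0,t]}(s)$. Hence $(\xi_t)_{t\geq 0}$ is a centered Gaussian process, and the inner product on $|\mathcal{H}|$ gives
\[
E\bigl[(\xi_t-\xi_s)^2\bigr]=H(2H-1)\int_s^t\!\!\int_s^t e^{-\theta(u+v)}|u-v|^{2H-2}\,du\,dv,\qquad 0\leq s\leq t.
\]

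For (i), I would bound $e^{-\theta(u+v)}\leq 1$ and use the standard identity $H(2H-1)\int_s^t\!\int_s^t|u-v|^{2H-2}\,du\,dv=(t-s)^{2H}$ to obtain $E[(\xi_t-\xi_s)^2]\leq (t-s)^{2H}$. Since $\xi_t-\xi_s$ is Gaussian, this upgrades to $E[|\xi_t-\xi_s|^{2p}]\leq c_p(t-s)^{2pH}$ for every integer $p\geq1$. Choosing $p$ large, the Kolmogorov--\v{C}entsov continuity criterion produces the desired $(H-\varepsilon)$-H\"older modification.

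For (ii), I would first establish $L^2$-convergence by showing that $(\xi_t)$ is Cauchy. The key step is the translation $(u,v)=(u'+s,v'+s)$ together with an enlargement of the domain to $[0,\infty)^2$, yielding
\[
E\bigl[(\xi_t-\xi_s)^2\bigr]\leq K\, e^{-2\theta s},\qquad t\geq s\geq 0,
\]
with $K:=H(2H-1)\int_0^\infty\!\!\int_0^\infty e^{-\theta(u+v)}|u-v|^{2H-2}\,du\,dv<\infty$, the finiteness being clear because the exponential decay dominates the integrable diagonal singularity (recall $2H-2>-1$). Consequently $\xi_t$ converges in $L^2$ to a centered Gaussian variable which we identify with $\xi_\infty=\int_0^\infty e^{-\theta r}dB_r$.

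For almost sure convergence, I would combine the Gaussian bound $E[|\xi_n-\xi_\infty|^{2p}]\leq c_p e^{-2p\theta n}$ with Borel--Cantelli, obtaining $\xi_n\to\xi_\infty$ a.s. along integer times; the H\"older continuity from (i) on $[n,n+1]$ then controls $\sup_{t\in[n,n+1]}|\xi_t-\xi_n|$ and upgrades the convergence to all real $t\to\infty$. The main technical point is the variance estimate giving the $e^{-2\theta s}$ decay; once this is in hand, Kolmogorov and Borel--Cantelli dispatch both (i) and (ii) routinely.
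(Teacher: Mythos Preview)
For part (i) and the $L^2$-half of (ii) your argument matches the paper's exactly. The almost sure part, however, has a gap as written. Invoking only ``the H\"older continuity from (i)'' does not make $\sup_{t\in[n,n+1]}|\xi_t-\xi_n|\to 0$ a.s.: part (i) only says that the oscillation on $[n,n+1]$ has moments bounded \emph{uniformly} in $n$, not that it decays. (Fractional Brownian motion itself satisfies the same bound $E[(B_t-B_s)^2]\le(t-s)^{2H}$, yet its oscillation on $[n,n+1]$ stays of order one.) What you need is to keep the exponential factor you discarded in (i): for $n\le s\le t\le n+1$,
\[
E[(\xi_t-\xi_s)^2]=H(2H-1)\int_s^t\!\!\int_s^t e^{-\theta(u+v)}|u-v|^{2H-2}\,du\,dv\le e^{-2\theta n}(t-s)^{2H}.
\]
Gaussianity then gives $E[|\xi_t-\xi_s|^{2p}]\le c_p\,e^{-2p\theta n}(t-s)^{2pH}$, and the quantitative form of Kolmogorov--\v{C}entsov (or Garsia--Rodemich--Rumsey) yields $E\bigl[\sup_{t\in[n,n+1]}|\xi_t-\xi_n|^{2p}\bigr]\le C_p\,e^{-2p\theta n}$, after which Borel--Cantelli finishes. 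This is a minor repair; once made, your route is correct.

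It is also genuinely different from the paper's. For the almost sure statement the paper does not pass through integer subsequences and oscillation bounds; instead it represents $\int_t^\infty e^{-\theta s}\,dB_s$ via a fractional-kernel identity (following Al\`os--Nualart), applies stochastic Fubini and Cauchy--Schwarz, and obtains directly the bound $E\bigl[\sup_{n\le t\le n+1}|\xi_\infty-\xi_t|^2\bigr]\le C\,e^{-2\theta n}$. That argument is self-contained but heavier; your corrected approach is more elementary, at the price of invoking the moment bound in Kolmogorov's criterion rather than merely the existence of a H\"older modification.
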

\begin{lemma}\label{convergence of dominating}
Let $H>\frac{1}{2}$. Then, as $t\rightarrow \infty$, \[e^{-2\theta t}\int_0^tX_s^2ds=e^{-2\theta t}\int_0^te^{2\theta s}\xi_s^2ds\longrightarrow \frac{\xi_{\infty}^2}{2\theta}\mbox{ almost surely.   }\]
\end{lemma}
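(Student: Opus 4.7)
The plan is to reduce the statement to a purely deterministic (pathwise) limit by working on the full-measure event where Lemma \ref{convergence p.s of xi} gives $\xi_t(\omega)\to \xi_\infty(\omega)$. Since $s\mapsto \xi_s(\omega)$ is continuous on this event (by the H\"older modification from part (i) of that lemma), the function $t\mapsto \int_0^t e^{2\theta s}\xi_s^2 \, ds$ is differentiable with derivative $e^{2\theta t}\xi_t^2$. Thus the statement becomes a calculus problem about the ratio
\[
\frac{\int_0^t e^{2\theta s}\xi_s^2 \, ds}{e^{2\theta t}}.
\]

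My first approach would be L'H\^opital's rule. The denominator $e^{2\theta t}$ tends to infinity and is differentiable; the numerator is differentiable as noted above. Hence
\[
\lim_{t\to\infty}\frac{\int_0^t e^{2\theta s}\xi_s^2 \, ds}{e^{2\theta t}}
=\lim_{t\to\infty}\frac{e^{2\theta t}\xi_t^2}{2\theta\,e^{2\theta t}}
=\lim_{t\to\infty}\frac{\xi_t^2}{2\theta}=\frac{\xi_\infty^2}{2\theta},
\]
where the last equality uses Lemma \ref{convergence p.s of xi}(ii). This gives the result almost surely. If one prefers to avoid appealing to L'H\^opital in the $\infty/\infty$ form (which is valid without requiring the numerator to diverge, as long as the denominator does), an elementary $\varepsilon$-argument works equally well: given $\varepsilon>0$, choose $T=T(\omega)$ so that $|\xi_s^2-\xi_\infty^2|<\varepsilon$ for $s\geq T$, split the integral at $T$, bound the $[0,T]$-piece by $C(\omega)e^{-2\theta t}\to 0$, and on $[T,t]$ use $e^{-2\theta t}\int_T^t e^{2\theta s}\,ds = (1-e^{-2\theta(t-T)})/(2\theta)\to 1/(2\theta)$, which pins down the limit up to an error of order $\varepsilon/(2\theta)$.

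There is no substantive obstacle: the argument is a standard Ces\`aro/L'H\^opital type manipulation whose only input is the almost sure convergence of $\xi_t$ already supplied by Lemma \ref{convergence p.s of xi}. The only point requiring a line of care is the pathwise setup, namely checking that the integrand is continuous so that differentiation under the integral is legitimate, which is taken care of by the H\"older modification of $\xi$.
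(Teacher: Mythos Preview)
Your proof is correct and follows essentially the same route as the paper: both reduce to a pathwise L'H\^opital argument using the almost sure convergence $\xi_t\to\xi_\infty$ from Lemma \ref{convergence p.s of xi}. The paper first verifies that the numerator $\int_0^t e^{2\theta s}\xi_s^2\,ds$ diverges a.s.\ (via $P(\xi_\infty=0)=0$ and a lower bound on the integral), but as you correctly observe this is unnecessary for the $\infty/\infty$ form of L'H\^opital, so your version is slightly more economical.
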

\begin{proof}[Proof of Lemma \ref{convergence p.s of xi}] We prove the point $i)$. We have, for every $0\leq s<t$,
\begin{eqnarray*}E\left(\xi_t-\xi_s\right)^2&=&E\left(\int_s^te^{-\theta r}dB_r\right)^2\\
&=&H(2H-1)\int_s^t\int_s^te^{-\theta u}e^{-\theta v}|u-v|^{2H-2}dudv\\
&\leq&H(2H-1)\int_s^t\int_s^t|u-v|^{2H-2}dudv\\
&=&E\left(B_t-B_s\right)^2=|t-s|^{2H}.
\end{eqnarray*}Thus, by applying  the Kolmogorov-Centsov theorem  to the centered gaussian process $\xi$ we deduce $i)$.
\\ Concerning the second point $ii)$, we first notice that the integral $\xi_{\infty}=\int_0^{\infty}e^{-\theta r}dB_r$ is well defined. In fact,
\begin{eqnarray}&&H(2H-1)\int_0^{\infty}\int_0^{\infty}e^{-\theta r}e^{-\theta s}|r-s|^{2H-2}drds\nonumber\\
&=&2H(2H-1)\int_0^{\infty}ds e^{-\theta s}\int_0^sdr e^{-\theta r}(s-r)^{2H-2}\nonumber\\
&=&2H(2H-1)\int_0^{\infty}ds e^{-2\theta s}\int_0^s due^{ \theta u}u^{2H-2}\nonumber\\
&=&2H(2H-1)\int_0^{\infty}due^{ \theta u}u^{2H-2}\int_u^{\infty} ds e^{-2\theta s}\nonumber\\
&=&\frac{H(2H-1)}{\theta}\int_0^{\infty}e^{ -\theta u}u^{2H-2}du\nonumber
\\&=& \frac{H(2H-1)}{\theta^{2H}}\Gamma(2H-1)=\frac{H\Gamma(2H)}{\theta^{2H}}<\infty,\label{xi_t-xi_{infty}}
\end{eqnarray}with $\Gamma$ denotes the classical Gamma function.
Moreover, $\xi_t$ converges to $\xi_{\infty}$ in $L^2(\Omega)$. Indeed,
\begin{eqnarray*}E\left[(\xi_{t}-\xi_{\infty})^2\right]&=&H(2H-1)\int_t^{\infty}\int_t^{\infty}e^{-\theta r}e^{-\theta s}|r-s|^{2H-2}drds\nonumber\\
&=&2H(2H-1)\int_t^{\infty}ds e^{-\theta s}\int_t^sdr e^{-\theta r}(s-r)^{2H-2}\nonumber\\
&=&2H(2H-1)\int_t^{\infty}dv e^{-2\theta s}\int_0^{s-t} due^{ \theta u}u^{2H-2}\nonumber\\
&=&2H(2H-1)e^{-2\theta t}\int_0^{\infty}dv e^{-2\theta v}\int_0^{v} due^{ \theta u}u^{2H-2}\nonumber\\
&=&2H(2H-1)e^{-2\theta t}\int_0^{\infty}due^{ \theta u}u^{2H-2}\int_u^{\infty} dv e^{-2\theta v}\nonumber\\
&=&\frac{H(2H-1)}{\theta}e^{-2\theta t}\int_0^{\infty}e^{ -\theta u}u^{2H-2}du\nonumber
\\&=& \frac{H\Gamma(2H)}{\theta^{2H}}e^{-2\theta t}\\&\rightarrow& 0 \mbox{ as } t\rightarrow \infty.\nonumber
\end{eqnarray*}
Now, let us show that $\xi_t\longrightarrow \xi_{\infty}$ almost surely as $t\rightarrow \infty$.
By using Borel-Cantelli lemma, it is sufficient to prove that, for any $\varepsilon > 0$
\begin{eqnarray}\label{BC}
\sum_{n\geq0} P\left(\sup_{n\leq t \leq n+1}\left|\int_{t}^{\infty}e^{-\theta s}dB_s\right|>\varepsilon\right)
<\infty.
\end{eqnarray}
For this purpose, let $\frac{1}{2}<\alpha<H$. As in the proof of [Theorem 4, \cite{AN}], we can write for every $t>0$
\begin{eqnarray*}
\int_{t}^{\infty}e^{-\theta s}dB_s= c_{\alpha}^{-1} \int_{t}^{\infty}dB_s e^{-\theta s}\left(\int_{t}^{s}dr(s-r)^{-\alpha}(r-t)^{\alpha-1}\right),
\end{eqnarray*}
with $c_{\alpha}= \int_{t}^{s}(s-r)^{-\alpha}(r-t)^{\alpha-1}dr= \beta(\alpha, 1- \alpha)$, where $\beta$ is the Beta function.\\
By Fubini's stochastic theorem (see for example \cite{Nualart}), we have
\begin{eqnarray*}
\int_{t}^{\infty}e^{-\theta s}dB_s= c_{\alpha}^{-1} \int_{t}^{\infty}dr(r-t)^{\alpha-1}\left(\int_{r}^{\infty}dB_s e^{-\theta s}(s-r)^{-\alpha}\right).
\end{eqnarray*}
Cauchy-Schwarz's inequality implies that
\begin{eqnarray*}
&&\left|\int_{t}^{\infty}e^{-\theta s}dB_s\right|^{2}\\&\leq& c_{\alpha}^{-2}\left(\int_{t}^{\infty}(r-t)^{2(\alpha-1)}
e^{-\theta(r-t)}dr\right)\left(\int_{t}^{\infty}dr e^{-\theta(r-t)}\left|\int_{r}^{\infty}dB_s e^{-\theta s}(s-r)^{-\alpha}e^{\theta (r-t)}\right|^{2}\right)\\
&=& \frac{c_{\alpha}^{-2} \Gamma(2 \alpha -1)}{\theta ^{2 \alpha -1}} e^{-2 \theta t}\int_{t}^{\infty}dr e^{-\theta(r-t)}\left|\int_{r}^{\infty}dB_s(s-r)^{-\alpha}e^{-\theta (s-r)}\right|^{2}
\end{eqnarray*}
Thus,
\begin{eqnarray*}
&&\sup_{n\leq t \leq {n+1}}\left|\int_{t}^{\infty}e^{-\theta s}dB_s\right|^{2}\\&&\leq\frac{c_{\alpha}^{-2} \Gamma(2 \alpha -1)}{\theta ^{2 \alpha -1}}e^{-2 \theta n}e^{\theta}\int_{n}^{\infty}dr e^{-\theta(r-n)}\left|\int_{r}^{\infty}dB_s(s-r)^{-\alpha}e^{-\theta (s-r)}\right|^{2}
\end{eqnarray*}
On the other hand,
\begin{eqnarray*}
 && E \left( \left|\int_{r}^{\infty}(s-r)^{-\alpha}e^{-\theta (s-r)}dB_s\right|^{2}\right)\\&=&H(2H-1)
  \int_{r}^{\infty}dv(v-r)^{-\alpha}e^{-\theta (v-r)}\int_{r}^{\infty}du(u-r)^{-\alpha}e^{-\theta (u-r)}|u-v|^{2H-2}\\&=&H(2H-1)
  \int_{0}^{\infty}dvv^{-\alpha}e^{-\theta v}\int_{0}^{\infty}duu^{-\alpha}e^{-\theta u}|u-v|^{2H-2}
  \\&=&2H(2H-1)
  \int_{0}^{\infty}dvv^{-\alpha}e^{-\theta v}\int_{0}^{v}duu^{-\alpha}e^{-\theta u}(v-u)^{2H-2}\\&=&2H(2H-1)
  \int_{0}^{\infty}dvv^{-\alpha}e^{-\theta v}\int_{0}^{v}du(v-u)^{-\alpha}e^{-\theta(v- u)}u^{2H-2}\\&\leq&2H(2H-1)
  \int_{0}^{\infty}dvv^{-\alpha}e^{-\theta v}\int_{0}^{v}du(v-u)^{-\alpha}u^{2H-2}\\&=&2H(2H-1)
  \int_{0}^{\infty}dvv^{2H-2\alpha-1}e^{-\theta v}\int_{0}^{1}du u^{2H-2}(1-u)^{-\alpha}\\&=&2H(2H-1)
  \frac{\Gamma(2H-2\alpha)\beta(2H-1,1-\alpha)}{\theta^{2H-2\alpha}}:=C_1(\alpha,H,\theta)<\infty.
\end{eqnarray*}
 Combining this with the fact that $\int_{n}^{\infty} e^{-\theta(r-n)}dr=\frac{1}{\theta}$, we obtain
\begin{eqnarray*}
E\left(\sup_{n\leq t \leq {n+1}}\left|\int_{t}^{\infty}e^{-\theta s}dB_s\right|^{2}\right)&\leq& C_2(\alpha, H, \theta)e^{-2\theta n},
\end{eqnarray*}
with
\[C_2(\alpha, H, \theta)= \frac{c_{\alpha}^{-2} \Gamma(2 \alpha -1)e^{\theta}}{\theta ^{2 \alpha}}C_1(\alpha,H,\theta).\]
Consequently,
\begin{eqnarray*}
\sum_{n\geq0} P\left(\sup_{n\leq t \leq {n+1}}\left|\int_{t}^{\infty}e^{-\theta s}dB_s\right|>\varepsilon\right)
&\leq& \varepsilon^{-2}\sum_{n\geq0}E\left(\sup_{n\leq t \leq {n+1}}\left|\int_{t}^{\infty}e^{-\theta s}dB_s\right|^2\right)\\
&\leq&\varepsilon^{-2}C_2(\alpha, H, \theta)\sum_{n\geq0} e^{-2\theta n} <\infty.
\end{eqnarray*}
This finishes the proof of the claim (\ref{BC}), and thus the proof of Lemma \ref{convergence p.s of xi}.
\end{proof}\\
\begin{proof}[Proof of Lemma \ref{convergence of dominating}]
Using (\ref{xi_t-xi_{infty}}),  we have  $$E[\xi_{\infty}^2]=\frac{H\Gamma(2H)}{\theta^{2H}}<\infty.$$ Hence $\xi_{\infty}\backsim \mathcal{N}(0,\frac{H\Gamma(2H)}{\theta^{2H}})$ and this implies that
\begin{eqnarray}\label{xi_infty<fini}P(\xi_{\infty}=0)=0.
\end{eqnarray}
The continuity of $\xi$ entails that, for every $t>0$    \begin{eqnarray}\label{minoration xi}\int_0^te^{2\theta s}\xi_s^2ds\geq
 \int_{\frac{t}{2}}^te^{2\theta s}\xi_s^2ds\geq\frac{t}{2}e^{\theta t}\left(\inf_{\frac{t}{2}\leq s\leq t}\xi_{s}^2\right)\ \mbox{ almost surely}.\end{eqnarray} Furthermore, the continuity of $\xi$ and the point ii) of Lemma \ref{convergence p.s of xi} yield  \[\lim_{t\rightarrow\infty}\left(\inf_{\frac{t}{2}\leq s\leq t}\xi_{s}^2\right)=\xi_{\infty}\ \mbox{ almost surely}.\]Combining  this last convergence with (\ref{minoration xi}) and (\ref{xi_infty<fini}), we deduce that
  \[\lim_{t\rightarrow\infty}\int_0^te^{2\theta s}\xi_s^2ds=\infty\ \mbox{ almost surely}.\]
Hence, we can use L'Hôspital's rule and we obtain
\begin{eqnarray*}\lim_{t\rightarrow\infty}\frac{\int_0^te^{2\theta s}\xi_s^2ds}{e^{2\theta t}} = \lim_{t\rightarrow\infty}\frac{\xi_t^2}{2\theta}
 = \frac{\xi_{\infty}^2}{2\theta}\quad \mbox{ almost surely.}
\end{eqnarray*}This completes the proof of Lemma \ref{convergence of dominating}.\end{proof} \\
\begin{proof}[Proof of Theorem \ref{convergence almost surely}]
Using the change of variable formula (\ref{change of variables formula}), we conclude that
\begin{eqnarray*}\frac{1}{2}e^{2\theta t}\xi_t^2=\theta\int_0^te^{2\theta s}\xi_s^2ds+\int_0^te^{ \theta s}\xi_s dB_s
\end{eqnarray*}Hence
\begin{eqnarray*}\widehat{\theta}_t-\theta =\frac{\int_0^te^{\theta s}\xi_sdB_s}{\int_0^te^{2\theta s}\xi_s^2ds}=\frac{\xi_t^2}{2e^{-2\theta t}\int_0^te^{2\theta s}\xi_s^2ds}-\theta.
\end{eqnarray*}Combining this with Lemma \ref{convergence p.s of xi} and Lemma \ref{convergence of dominating}, we deduce  that $\widehat{\theta}_t\rightarrow\theta$ almost surely as $t\longrightarrow\infty$.
\end{proof}

\subsection{Asymptotic distribution of the estimator LSE}
This paragraph is devoted to the investigation of asymptotic distribution of the LSE $\widehat{\theta}_t$ of $\theta$.
We start with the following lemma.
\begin{lemma}\label{decomposition Young Skorohod}
Suppose that $H>\frac{1}{2}$. Then, for every $t\geq0$, we have \begin{eqnarray*}\int_0^tdB_se^{\theta s}\int_0^sdB_re^{-\theta r}&=&\int_0^tdB_se^{\theta s}\int_0^tdB_re^{-\theta r}-\int_0^t\delta B_se^{-\theta s}\int_0^s\delta B_re^{\theta r}\\&&-H(2H-1)\int_0^tdse^{-\theta s}\int_0^sdre^{\theta r}|s-r|^{2H-2}.
\end{eqnarray*}\end{lemma}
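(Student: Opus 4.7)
The plan is to derive the identity in two steps: first apply the Young change-of-variables formula (\ref{change of variables formula}) to generate an exact differential, and then convert the residual Young integral into a Skorohod integral via (\ref{link young skorohod}).

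Introduce the companion process $\eta_s := \int_0^s e^{\theta r}dB_r$, which plays the role dual to $\xi_s$. As in the proof of Lemma \ref{convergence p.s of xi}(i), Kolmogorov's continuity criterion shows that $\eta$ admits an $(H-\varepsilon)$-H\"older continuous modification for every $\varepsilon\in(0,H-\tfrac{1}{2})$, so both $\xi$ and $\eta$ can be taken H\"older of order strictly greater than $\tfrac{1}{2}$. Applying (\ref{change of variables formula}) with $f=\xi$, $g=\eta$, $\phi(x,y)=xy$, and using $df_u=e^{-\theta u}dB_u$, $dg_u=e^{\theta u}dB_u$, $\xi_0=\eta_0=0$, produces
\begin{eqnarray*}
\xi_t\eta_t = \int_0^t \eta_u e^{-\theta u}dB_u + \int_0^t \xi_u e^{\theta u}dB_u.
\end{eqnarray*}
The left-hand side of the lemma is exactly the second integral on the right, so rearranging writes it as $\xi_t\eta_t$ (which is precisely the first term of the claimed right-hand side) minus the Young integral $\int_0^t \eta_u e^{-\theta u}dB_u$.

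The remaining step converts this residual Young integral into a Skorohod integral plus a trace correction by means of (\ref{link young skorohod}), applied to $u_r:=\eta_r e^{-\theta r}$. Because $\eta_r=B(e^{\theta\cdot}1_{[0,r]})$ by (\ref{non random}), one computes
\begin{eqnarray*}
D_s\bigl(\eta_r e^{-\theta r}\bigr) = e^{\theta s-\theta r}\,1_{\{s\leq r\}},
\end{eqnarray*}
which is bounded, so $u\in Dom\,\delta$ and the integrability hypothesis of (\ref{link young skorohod}) holds trivially. Plugging this derivative into the trace term and applying Fubini yields
\begin{eqnarray*}
\int_0^t \eta_u e^{-\theta u}dB_u = \int_0^t \eta_u e^{-\theta u}\delta B_u + H(2H-1)\int_0^t ds\,e^{-\theta s}\int_0^s dr\,e^{\theta r}(s-r)^{2H-2}.
\end{eqnarray*}
Re-expressing $\eta_u$ as $\int_0^u e^{\theta r}\delta B_r$ via (\ref{non random}) identifies the Skorohod piece with the second term on the right-hand side of the lemma, and the trace integral with the third (after moving it across with the appropriate sign). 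Substitution back into the rearranged change-of-variables identity completes the proof.

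The only real obstacle is bookkeeping: one must verify the H\"older regularity of $\eta$ (a direct copy of the argument for $\xi$) and check the hypotheses of (\ref{link young skorohod}), both of which are immediate from the uniform boundedness of $D_s u_r$. No delicate estimates are required beyond these routine justifications.
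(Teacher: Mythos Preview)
Your proof is correct and follows essentially the same route as the paper: apply the change-of-variables formula (\ref{change of variables formula}) to the product $\xi_t\eta_t$ to split the Young integral, then convert the residual term $\int_0^t e^{-\theta s}\eta_s\,dB_s$ into its Skorohod counterpart via (\ref{link young skorohod}) and (\ref{non random}). The only additions you make---naming $\eta$, checking its H\"older regularity on $[0,t]$, and computing $D_s u_r$ explicitly---are expository details that the paper leaves implicit.
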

\begin{proof}
Let $t\geq0$. By  the change of variable formula (\ref{change of variables formula})
\begin{eqnarray*}\int_0^tdB_se^{\theta s}\int_0^sdB_re^{-\theta r}&=&\int_0^tdB_se^{\theta s}\int_0^tdB_re^{-\theta r}-\int_0^tdB_se^{-\theta s}\int_0^sdB_re^{\theta r}.
\end{eqnarray*}
On the other hand, according to (\ref{link young skorohod}) and (\ref{non random}),
\begin{eqnarray*}&&\int_0^t dB_se^{-\theta s}\int_0^sdB_re^{\theta r}\\&=&\int_0^t\delta B_se^{-\theta s}\int_0^s\delta B_re^{\theta r}+H(2H-1)\int_0^tdse^{-\theta s}\int_0^sdre^{\theta r}|s-r|^{2H-2},
\end{eqnarray*} which completes the proof.
\end{proof}
\begin{theorem}\label{convergence distribution}Let $H>\frac{1}{2}$ be fixed. Then, as $t\longrightarrow \infty$,
\begin{eqnarray*}e^{\theta t}\left(\widehat{\theta}_t-\theta\right)\overset{\texttt{law}}{\longrightarrow}2\theta\mathcal{C}(1),
\end{eqnarray*}with $\mathcal{C}(1)$  the standard Cauchy distribution.
\end{theorem}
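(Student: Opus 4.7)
From the proof of Theorem \ref{convergence almost surely} we have
$$
e^{\theta t}(\widehat{\theta}_t - \theta) = \frac{e^{-\theta t}\int_0^t e^{\theta s}\xi_s\,dB_s}{e^{-2\theta t}\int_0^t e^{2\theta s}\xi_s^2\,ds},
$$
and by Lemma \ref{convergence of dominating} the denominator converges almost surely to $\xi_\infty^2/(2\theta)$, which is strictly positive a.s.\ by (\ref{xi_infty<fini}). So it suffices to identify the limit in law of the numerator and apply Slutsky.

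The plan is to apply Lemma \ref{decomposition Young Skorohod} to the numerator, which expresses $\int_0^t e^{\theta s}\xi_s\,dB_s$ as the sum of three pieces: (i) the boundary product $\xi_t\int_0^t e^{\theta s}dB_s$, (ii) a double Skorohod iterated integral lying in the second Wiener chaos of $B$, and (iii) the deterministic integral $H(2H-1)\int_0^t\int_0^s e^{-\theta s}e^{\theta r}|s-r|^{2H-2}dr\,ds$. Setting $\eta_t := e^{-\theta t}\int_0^t e^{\theta s}dB_s$, division by $e^{\theta t}$ turns (i) into $\xi_t\eta_t$. For (iii), the substitution $u=s-r$ in the inner integral gives $\int_0^s e^{-\theta u}u^{2H-2}du\to\Gamma(2H-1)/\theta^{2H-1}$ as $s\to\infty$, so (iii) is $O(t)$ and $e^{-\theta t}\cdot(\mathrm{iii})\to 0$. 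For (ii), I would identify the kernel as (the symmetrization of) $e^{\theta(r-s)}\mathbf 1_{\{0\le r\le s\le t\}}$, namely $f_t(u,v)=\tfrac12 e^{-\theta|u-v|}\mathbf 1_{[0,t]^2}(u,v)$, and bound $\|f_t\|_{\mathcal H^{\otimes 2}}^2\le\|f_t\|_{|\mathcal H|^{\otimes 2}}^2$; using $e^{-\theta|\cdot|}\le 1$ the fourfold integral reduces to $\bigl(\int_0^t\int_0^t|u-v|^{2H-2}du\,dv\bigr)^2=O(t^{4H})$, so $e^{-\theta t}\cdot(\mathrm{ii})\to 0$ in $L^2$.

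It remains to handle $\xi_t\eta_t$. Since $\eta_t$ and $\xi_\infty$ both lie in the first Wiener chaos of $B$, the pair $(\eta_t,\xi_\infty)$ is jointly Gaussian. The substitution $u=t-s$, $v=t-r$ yields
$$
\mathrm{Var}(\eta_t)=H(2H-1)\int_0^t\int_0^t e^{-\theta u}e^{-\theta v}|u-v|^{2H-2}du\,dv\longrightarrow\frac{H\Gamma(2H)}{\theta^{2H}}=\mathrm{Var}(\xi_\infty),
$$
while a direct estimate (based on $\int_0^\infty e^{-\theta r}|s-r|^{2H-2}dr=O(s^{2H-2})$ for large $s$) gives $\mathrm{Cov}(\eta_t,\xi_\infty)=O(t^{2H-2})\to 0$. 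Uncorrelated jointly Gaussian variables being independent, $(\eta_t,\xi_\infty)$ converges in law to $(Z,\widetilde\xi)$ with $Z,\widetilde\xi$ independent and centered Gaussian of variance $H\Gamma(2H)/\theta^{2H}$, and $\widetilde\xi\overset{\mathrm{law}}{=}\xi_\infty$. Combined with $\xi_t\to\xi_\infty$ a.s.\ and the a.s.\ convergence of the denominator, this yields
$$
e^{\theta t}(\widehat{\theta}_t-\theta)\ \overset{\mathrm{law}}{\longrightarrow}\ \frac{\widetilde\xi\,Z}{\widetilde\xi^2/(2\theta)}=\frac{2\theta\,Z}{\widetilde\xi},
$$
which, as the ratio of two independent centered Gaussians of equal variance, has law $2\theta\,\mathcal C(1)$.

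The main technical obstacle should be the $L^2$-bound on the double Skorohod integral: one must correctly identify its symmetric kernel and then verify that the $|\mathcal H|^{\otimes 2}$-estimate grows only polynomially in $t$, so that the factor $e^{-2\theta t}$ forces convergence to zero. The covariance estimates for the Gaussian pair $(\eta_t,\xi_\infty)$ are elementary but the rates $t^{2H-2}$ and the variance convergence require careful bookkeeping of exponential factors and splittings of the singular integral $\int_0^\infty e^{-\theta r}|s-r|^{2H-2}dr$.
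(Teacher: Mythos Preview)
Your proposal is correct and follows the paper's approach: the same decomposition via Lemma~\ref{decomposition Young Skorohod}, the same $L^{2}$ and deterministic estimates for pieces (ii) and (iii) (this is exactly the paper's Lemma~\ref{lemma in l^2}), and the same Slutsky structure for the main term. The only minor variation is that where the paper proves the general Lemma~\ref{lemma in law} (asymptotic independence of $\eta_t$ from \emph{every} $B_s$, hence from any $\sigma\{B\}$-measurable $F$), you work directly with the Gaussian pair $(\eta_t,\xi_\infty)$ and show its covariance tends to zero --- a legitimate shortcut, since only $F=\xi_\infty$ is ultimately needed, and your time-reversal substitution $u=t-s$ for $\mathrm{Var}(\eta_t)$ is in fact cleaner than the paper's direct computation.
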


In order to prove Theorem \ref{convergence distribution} we need the following two lemmas.
\begin{lemma}\label{lemma in law}Fix $H>\frac{1}{2}$. Let $F$ be any $\sigma\{B\}$-measurable random variable such that $P(F<\infty)=1$. Then, as $t\longrightarrow\infty$,  \begin{eqnarray*}\left(F, e^{-\theta t}\int_0^te^{\theta s}dB_s\right)\overset{\texttt{law}}{\longrightarrow}\left(F, \frac{\sqrt{H\Gamma(2H)}}{\theta^{H}}N\right),
\end{eqnarray*}where $N\sim \mathcal{N}(0,1)$ is independent of $B$.
\end{lemma}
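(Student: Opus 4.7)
The plan is to show that $Z_t:=e^{-\theta t}\int_0^t e^{\theta s}dB_s$ is centered Gaussian with variance converging to a specific constant, and that it becomes asymptotically independent of any fixed portion of the path of $B$. Since $s\mapsto e^{\theta s}\mathbf{1}_{[0,t]}(s)$ is non-random and H\"older continuous, (\ref{non random}) yields that $Z_t$ is a centered Gaussian variable for each $t$. So the whole statement reduces to (a) convergence of $\mathrm{Var}(Z_t)$ to $\sigma^2:=H\Gamma(2H)/\theta^{2H}$, and (b) asymptotic vanishing of the covariance of $Z_t$ with any ``bounded portion'' of $B$.

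For (a), I would use the time-reversal substitution $u'=t-u,\ v'=t-v$ in the double integral representing $E[Z_t^2]$; the prefactor $e^{-2\theta t}$ is absorbed by the $e^{\theta(2t-u'-v')}$ produced by the substitution, so that
\[
\mathrm{Var}(Z_t)=H(2H-1)\int_0^t\!\int_0^t e^{-\theta u'}e^{-\theta v'}|u'-v'|^{2H-2}du'\,dv'=E[\xi_t^2],
\]
which converges to $H\Gamma(2H)/\theta^{2H}$ by the computation (\ref{xi_t-xi_{infty}}). In particular $Z_t\overset{\texttt{law}}{\longrightarrow}\sigma N$.

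For (b), I would first treat cylindrical $F=f(B(\varphi_1),\dots,B(\varphi_n))$ with $f$ bounded continuous and each $\varphi_j$ supported in some fixed $[0,T]$. The vector $(B(\varphi_1),\dots,B(\varphi_n),Z_t)$ is then jointly Gaussian; the top block of its covariance matrix is $t$-independent and the cross-covariance is
\[
E[B(\varphi)Z_t]=H(2H-1)e^{-\theta t}\int_0^T\varphi(r)\int_0^t e^{\theta s}|r-s|^{2H-2}ds\,dr.
\]
Splitting the inner integral at $s=2T$ and using $|r-s|\ge s/2$ on $[2T,t]$ together with the L'H\^opital asymptotic $\int_0^t e^{\theta s}s^{2H-2}ds\sim e^{\theta t}t^{2H-2}/\theta$, this cross-covariance decays at rate $t^{2H-2}$, uniformly in $r\in[0,T]$. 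Joint Gaussianity with vanishing cross-covariance then gives $(B(\varphi_1),\dots,B(\varphi_n),Z_t)\overset{\texttt{law}}{\longrightarrow}(B(\varphi_1),\dots,B(\varphi_n),\sigma N)$ with $N$ independent of the first block, and the continuous mapping theorem closes the cylindrical case.

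Finally, for arbitrary $\sigma(B)$-measurable a.s.\ finite $F$, I would approximate $F$ in probability by cylindrical $F_k$ (truncate $F$ at levels $\pm k$, then use density of $\mathcal{S}$ in $L^2$) and invoke the bounded decomposition
\[
E[\phi(F)\psi(Z_t)]=E[\phi(F_k)\psi(Z_t)]+E[(\phi(F)-\phi(F_k))\psi(Z_t)]
\]
for bounded continuous $\phi,\psi$: the first summand converges as $t\to\infty$ to $E[\phi(F_k)]E[\psi(\sigma N)]$ by the cylindrical case, while the second is dominated uniformly in $t$ by $\|\psi\|_\infty E[|\phi(F)-\phi(F_k)|]\to 0$ as $k\to\infty$. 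The main obstacle is the uniform cross-covariance estimate in step (b), where the mild singularity of $|r-s|^{2H-2}$ at $s=r$ meets the explosion of $e^{\theta s}$ near $s=t$; once that bound is in hand, the Gaussian structure together with a routine approximation argument closes the argument.
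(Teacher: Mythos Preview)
Your proposal is correct and follows the same overall strategy as the paper: reduce to the convergence of the Gaussian covariance matrix of $(B_{s_1},\dots,B_{s_d},Z_t)$, i.e.\ show that $\mathrm{Var}(Z_t)\to H\Gamma(2H)/\theta^{2H}$ and that the cross-covariances vanish. The computational details differ, however. For the variance, your time-reversal $u'=t-u,\ v'=t-v$ immediately identifies $\mathrm{Var}(Z_t)$ with $E[\xi_t^2]$, so the limit comes for free from (\ref{xi_t-xi_{infty}}); the paper instead redoes the computation directly by Fubini. For the cross term, the paper works with $E[B_s Z_t]$, splits it as $I_t+J_t$, and handles $J_t$ via an integration by parts followed by a dominated-convergence argument; your splitting at $s=2T$ together with $|r-s|\ge s/2$ and the L'H\^opital asymptotic $e^{-\theta t}\int_0^t e^{\theta s}s^{2H-2}ds\sim t^{2H-2}/\theta$ is more direct and yields a uniform decay rate in $r\in[0,T]$ without integration by parts. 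Finally, the paper simply asserts that the finite-dimensional convergence ``is enough'' (deferring to \cite{EN}); you make this explicit via the cylindrical approximation and the two-term decomposition of $E[\phi(F)\psi(Z_t)]$, which is correct since tightness of $(F,Z_t)$ is automatic and products of bounded continuous functions determine convergence to a product law.
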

\begin{lemma}\label{lemma in l^2} Let $H>\frac{1}{2}$. Then, as $ t\rightarrow \infty$,
\begin{eqnarray}\label{1}e^{-\frac{\theta t}{2}}\int_0^t\delta B_se^{-\theta s}\int_0^s\delta B_re^{\theta r}\longrightarrow0\ \mbox{ in }L^2(\Omega),\end{eqnarray}and
\begin{eqnarray}\label{2}e^{-\frac{\theta t}{2}}\int_0^tdse^{-\theta s}\int_0^sdre^{\theta r}|s-r|^{2H-2}\longrightarrow0.
\end{eqnarray}
\end{lemma}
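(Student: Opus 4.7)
The plan is to treat the two claims separately; (\ref{2}) is purely deterministic and serves as a warm-up for the stochastic computation needed in (\ref{1}).

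For (\ref{2}), I would evaluate the inner $r$-integral by the change of variable $u = s-r$:
$$\int_0^s e^{\theta r}(s-r)^{2H-2}\,dr \;=\; e^{\theta s}\int_0^s u^{2H-2} e^{-\theta u}\,du \;\le\; \frac{\Gamma(2H-1)}{\theta^{2H-1}}\,e^{\theta s}.$$
The factor $e^{\theta s}$ cancels against the outer $e^{-\theta s}$, so after a trivial further bound the whole expression is at most $C\,t\,e^{-\theta t/2}$, which tends to $0$.

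For (\ref{1}) the strategy is to identify the iterated Skorohod integral as an element of the second Wiener chaos and to compute its $L^2(\Omega)$-norm explicitly. By (\ref{non random}), $\int_0^s e^{\theta r}\delta B_r = B(e^{\theta\cdot}1_{[0,s]})$ lies in the first chaos, so
$$u_s \;:=\; e^{-\theta s}\int_0^s e^{\theta r}\delta B_r \;=\; I_1(k_s), \qquad k_s(r)\;=\;e^{\theta(r-s)}1_{[0,s]}(r),$$
is a first-chaos-valued integrand. Then $\int_0^t u_s\,\delta B_s = I_2(\tilde k)$, where $\tilde k$ is the symmetrization of the kernel $(s,r)\mapsto k_s(r)1_{[0,t]}(s)$; a short calculation gives the pleasant form
$$\tilde k(s,r) \;=\; \tfrac12\, e^{-\theta|s-r|}\,1_{[0,t]^2}(s,r).$$
By the Wiener isometry this yields $E[I_2(\tilde k)^2] = 2\,\|\tilde k\|_{\mathcal{H}^{\otimes 2}}^2$.

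The remaining and most laborious step is to show this $\mathcal{H}^{\otimes 2}$-norm grows at most polynomially in $t$. Using the uniform bound $E[u_s^2]\le H\Gamma(2H)/\theta^{2H}$ (which follows from exactly the substitution performed for (\ref{2})), a Cauchy--Schwarz estimate in the fourfold integral defining $\|\tilde k\|_{\mathcal{H}^{\otimes 2}}^2$ yields
$$E\!\left[\Bigl(\int_0^t u_s\,\delta B_s\Bigr)^{\!2}\right] \;\le\; C\,t^{2H},$$
and multiplication by $e^{-\theta t}$ then forces the quantity to $0$. The main obstacle is handling the four-dimensional integral carrying the singular weights $|s-s'|^{2H-2}|r-r'|^{2H-2}$; but because $k_s$ is deterministic, no genuine Malliavin-calculus argument on random kernels intervenes, and the required bound reduces to elementary estimates on integrals of the type $\int e^{-\theta|\cdot|}|\cdot|^{2H-2}$, already at play in (\ref{2}) and in the proof of Lemma \ref{convergence p.s of xi}.
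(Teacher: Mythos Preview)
Your proposal is correct and follows the paper's route: identify the iterated Skorohod integral as $\tfrac12\,I_2\bigl(e^{-\theta|s-r|}1_{[0,t]^2}\bigr)$ and bound its $\mathcal{H}^{\otimes 2}$-norm. The only difference is that the paper bypasses your Cauchy--Schwarz step (via the uniform bound on $E[u_s^2]$) by the cruder move $e^{-\theta|s-r|}\le 1$, $e^{-\theta|s'-r'|}\le 1$ directly inside the fourfold integral, which collapses it to $\bigl(E[B_t^2]\bigr)^2=t^{4H}$ and hence gives $\tfrac12\,t^{4H}e^{-\theta t}\to 0$; your sharper $Ct^{2H}$ growth is fine but unnecessary here. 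For (\ref{2}) the paper likewise just uses $e^{-\theta(s-r)}\le 1$ to obtain $\tfrac12\,t^{2H}e^{-\theta t/2}$ in place of your $Ct\,e^{-\theta t/2}$.
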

\begin{proof}[Proof of Lemma \ref{lemma in law}]
For any $d\geq1$, $s_1\ldots s_d\in[0,\infty)$, we shall prove that, as $t\longrightarrow\infty$,
\begin{eqnarray}\label{law technic}\left(B_{s_1},\ldots,B_{s_d},e^{-\theta t}\int_0^te^{\theta s}dB_s\right)\overset{\texttt{law}}{\longrightarrow}
\left(B_{s_1},\ldots,B_{s_d},\frac{\sqrt{H\Gamma(2H)}}{\theta^{H}}N\right)
\end{eqnarray} which is enough to lead to the desired conclusion. Because the left-hand side in the previous
convergence is a Gaussian vector (see proof of [Lemma 7, \cite{EN}]), to get (\ref{law technic}) it is sufficient to check the convergence of its
covariance matrix. Let us first compute the limiting variance of $e^{-\theta t}\int_0^te^{\theta s}dB_s$  as $t\longrightarrow\infty$. We have
\begin{eqnarray*}E\left[\left(e^{-\theta t}\int_0^te^{\theta s}dB_s\right)^2\right]&=&H(2H-1)e^{-2\theta t}\int_0^t\int_0^te^{\theta s}e^{\theta r}|s-r|^{2H-2}drds\\
&=&2H(2H-1)e^{-2\theta t}\int_0^tdse^{\theta s}\int_0^s dr e^{\theta r}|s-r|^{2H-2}\\
&=&2H(2H-1)e^{-2\theta t}\int_0^tdse^{2\theta s}\int_0^s dr e^{-\theta r}r^{2H-2}\\
&=&2H(2H-1)e^{-2\theta t}\int_0^t dr e^{-\theta r}r^{2H-2}\int_r^t dse^{2\theta s}\\
&=&\frac{H(2H-1)}{\theta}\left(\int_0^t r^{2H-2}e^{-\theta r}dr-e^{-2\theta t}\int_0^t r^{2H-2} e^{\theta r}dr \right)\\&\rightarrow&\frac{H\Gamma(2H)}{\theta^{2H}}  \  \mbox{ as }t\rightarrow \infty,
\end{eqnarray*}because $e^{-2\theta t}\int_0^t r^{2H-2} e^{\theta r}dr\leq e^{- \theta t}\int_0^t r^{2H-2}dr=\frac{t^{2H-1}}{(2H-1)e^{\theta t}}\rightarrow 0$ as $t\rightarrow \infty$.\\
Thus,
\begin{eqnarray*}\lim_{t\rightarrow\infty}E\left[\left(e^{-\theta t}\int_0^te^{\theta s}dB_s\right)^2\right]=\frac{H\Gamma(2H)}{\theta^{2H}}.
\end{eqnarray*}
Hence, to finish the proof it remains to check that, for all fixed $s\geq0$,
\begin{eqnarray*}\lim_{t\rightarrow\infty}E\left(B_s \times e^{-\theta t}\int_0^te^{\theta v}dB_v\right)=0.
\end{eqnarray*}
Indeed, for $s<t$,
\begin{eqnarray*}&&E\left(B_s \times e^{-\theta t}\int_0^te^{\theta v}dB_v\right)\\&=&H(2H-1)e^{-\theta t}\int_0^tdve^{\theta v}\int_0^sdu |u-v|^{2H-2}\\
&=&H(2H-1)e^{-\theta t}\int_0^sdve^{\theta v}\int_0^sdu |u-v|^{2H-2}+
H(2H-1)e^{-\theta t}\int_s^tdve^{\theta v}\int_0^sdu (v-u)^{2H-2}\\
&=&H(2H-1)e^{-\theta t}\int_0^sdve^{\theta v}\int_0^sdu |u-v|^{2H-2}+
He^{-\theta t}\int_s^te^{\theta v}(v^{2H-1}-(v-s)^{2H-1})dv\\
&:=&I_t+J_t.
\end{eqnarray*}It's clear that $I_t\rightarrow 0 $ as $t\rightarrow \infty$.\\
Using integration by parts, the therm $J_t$ can be written as
\begin{eqnarray*}&&J_t\\&=&He^{-\theta t}\int_s^te^{\theta v}(v^{2H-1}-(v-s)^{2H-1})dv\\
&=&He^{-\theta t}\left(\frac{e^{\theta t}}{\theta}[t^{2H-1}-(t-s)^{2H-1}]-\frac{e^{\theta s}}{\theta}s^{2H-1}
+\frac{2H-1}{\theta}\int_s^te^{\theta v}[(v-s)^{2H-2}-v^{2H-2}]dv\right)\\
&\leq&\frac{H}{\theta}[t^{2H-1}-(t-s)^{2H-1}]+\frac{H(2H-1)}{\theta}e^{-\theta t}\int_s^te^{\theta v}(v-s)^{2H-2}dv\\
&:=&J^1_t+J_t^2.
\end{eqnarray*}
Since $H<1$, $J^1_t=\frac{H}{\theta}[t^{2H-1}-(t-s)^{2H-1}]\rightarrow 0 $ as $t\rightarrow \infty$.\\
On the other hand,
\begin{eqnarray*}J_t^2&=&\frac{H(2H-1)}{\theta}e^{-\theta t}\int_s^te^{\theta v}(v-s)^{2H-2}dv\\
\\&=&\frac{H(2H-1)e^{\theta s}}{\theta}e^{-\theta t}\int_0^{t-s}e^{\theta u}u^{2H-2}du\\
&\leq&\frac{H(2H-1)e^{\theta s}}{\theta}e^{-\theta t}\int_0^{t}e^{\theta u}u^{2H-2}du\\
&=&\frac{H(2H-1)e^{\theta s}}{\theta}\int_0^{t}e^{-\theta v}(t-v)^{2H-2}dv\\
&=&\frac{H(2H-1)e^{\theta s}}{\theta}t^{2H-1}\int_0^{1}e^{-\theta tu}(1-u)^{2H-2}du
\end{eqnarray*}Fix $u\in(0,1)$. The function $t\in[0,\infty)\mapsto t^{2H-1}e^{-\theta tu}$ attains its maximum at $t=\frac{2H-1}{\theta u}$. Then
\[\sup_{t\geq0}(t^{2H-1}e^{-\theta tu})=ce^{-\frac{2H-1}{u}}u^{1-2H}\leq c u^{1-2H},\] with $c=\left(\frac{2H-1}{\theta}\right)^{2H-1}$. In addition, $\int_0^{1}u^{1-2H}(1-u)^{2H-2}du<\infty$, and for any $u\in(0,1)$,
\[t^{2H-1}e^{-\theta tu}(1-u)^{2H-2}\rightarrow 0 \mbox{ as } t\rightarrow \infty.\] Therefore, using the dominated convergence theorem, we obtain that $J^2_t$ converges to $0$ as $t\rightarrow \infty$.
\\Thus, we deduce the desired conclusion.
\end{proof}\\
\begin{proof}[Proof of Lemma \ref{lemma in l^2}]
Let us prove the convergence (\ref{1}). We have
\begin{eqnarray*}&&e^{-\theta t}E\left(\int_0^t\delta B_se^{-\theta s}\int_0^s\delta B_re^{\theta r}\right)^2\\
&=&e^{-\theta t}E\left(\int_0^t\int_0^se^{-\theta |s-r|}\delta B_r\delta B_s\right)^2\\&=&e^{-\theta t}
E\left(\frac{1}{2}I_2( e^{-\theta |s-r|}1_{[0,t]^2} )\right)^2
\\&=&\frac{H^2(2H-1)^2}{2}e^{-\theta t}
\int_{[0,t]^4} e^{-\theta |v-s|}e^{-\theta |u-r|} |v-u|^{2H-2}|s-r|^{2H-2}dudv drds\\&\leq&\frac{H^2(2H-1)^2}{2}e^{-\theta t}
\int_{[0,t]^4} |v-u|^{2H-2}|s-r|^{2H-2}dudv drds\\&=& \frac{1}{2}[E(B_t^2)]^2e^{-\theta t}=\frac{1}{2} t^{4H}e^{-\theta t}
\\&\rightarrow& 0 \ \mbox{ as }   t\rightarrow \infty.
\end{eqnarray*}
For the convergence (\ref{2}), we have
\begin{eqnarray*}&&H(2H-1)e^{-\frac{\theta t}{2}}\int_0^tdse^{-\theta s}\int_0^sdre^{\theta r}|s-r|^{2H-2}\\&\leq&
H(2H-1)e^{-\frac{\theta t}{2}}\int_0^tds \int_0^sdr |s-r|^{2H-2}\\&=&\frac{ t^{2H}}{2}e^{-\frac{\theta t}{2}}
\\&\rightarrow& 0 \ \mbox{ as }   t\rightarrow \infty.
\end{eqnarray*}This finishes the proof.
\end{proof}\\
\begin{proof}[Proof of the theorem \ref{convergence distribution}]
 By combining (\ref{representation of LSE}) and Lemma \ref{decomposition Young Skorohod}, we can write,
\begin{eqnarray*} e^{\theta t}\left(\widehat{\theta}_t-\theta\right)&=&\frac{e^{\theta t} \int_0^tdB_se^{\theta s}\int_0^sdB_re^{-\theta r}}{\int_0^te^{2\theta s}\xi_s^2ds}\\&=&\frac{\xi_t\xi_{\infty}}{e^{-2\theta t}\int_0^te^{2\theta s}\xi_s^2ds}\times\frac{e^{-\theta t}\int_0^te^{\theta s}dB_s}{\xi_{\infty}}\\&& -
\frac{e^{-\theta t}\int_0^t\delta B_se^{-\theta s}\int_0^s\delta B_re^{\theta r}}{e^{-2\theta t}\int_0^te^{2\theta s}\xi_s^2ds}\\&&-H(2H-1)\frac{e^{-\theta t}\int_0^tdse^{-\theta s}\int_0^sdre^{\theta r}|s-r|^{2H-2}}{e^{-2\theta t}\int_0^te^{2\theta s}\xi_s^2ds}\\&:=&A_t^{\theta}\times B_t^{\theta}-C_t^{\theta}- D_t^{\theta}.
\end{eqnarray*}
Using Lemme \ref{convergence p.s of xi} and Lemma \ref{convergence of dominating}, we obtain that
\begin{eqnarray*}A_t^{\theta}\longrightarrow2\theta \mbox{ almost surely    as } t\longrightarrow\infty.
\end{eqnarray*}
According to Lemma \ref{lemma in law}, we deduce
\begin{eqnarray*}B_t^{\theta} &\overset{\texttt{law}}{\longrightarrow}&\frac{\sqrt{H\Gamma(2H)}}{\theta^{H}}\frac{N}{\xi_{\infty}}
\ \mbox{ as } t\longrightarrow\infty.
\end{eqnarray*}Moreover, \[\frac{\sqrt{H\Gamma(2H)}}{\theta^{H}}\frac{N}{\xi_{\infty}}\overset{\texttt{law}}{=}\mathcal{C}(1),\] because
$\frac{\theta^{H}\xi_{\infty}}{\sqrt{H\Gamma(2H)}}\backsim \mathcal{N}(0,1)$  and  $N\sim\mathcal{N}(0,1)$ are independent.\\ Thus, by Slutsky's theorem, we conclude that
\[A_t^{\theta}\times B_t^{\theta}\overset{\texttt{law}}{\longrightarrow}2\theta \mathcal{C}(1) \ \mbox{ as } t\longrightarrow\infty.\]
On the other hand, it follows from  Lemma \ref{convergence of dominating} and Lemma \ref{lemma in l^2}, that
\begin{eqnarray*}C_t^{\theta}\overset{\texttt{prob.}}{\longrightarrow}0 \   \mbox{ as } t\longrightarrow\infty,
\end{eqnarray*}
and
\begin{eqnarray*}D_t^{\theta}\longrightarrow0\ \mbox{ almost surely }  \   \mbox{ as } t\longrightarrow\infty.
\end{eqnarray*}
Finally, by combining the previous convergences, the proof of Theorem \ref{convergence distribution} is done.\end{proof}
\\
\vspace{3mm}\\
{\bf Acknowledgments.} The authors would like to thank Ivan Nourdin for many valuable discussions on the
subject. We warmly thank him for proving that  $J_t$ converges to zero (see Proof of Lemma~~\ref{lemma in law}).

\bibliographystyle{amsplain}
\addcontentsline{toc}{chapter}{Bibliographie}

\end{document}